\documentclass[12pt]{amsart}

\usepackage[a4paper,centering,margin=2.5cm]{geometry}
\usepackage{cite}
\usepackage{enumerate}
\usepackage{xcolor,hyperref,cleveref}
\definecolor{darkblue}{rgb}{0.0,0.0,0.3}
\hypersetup{colorlinks,breaklinks,
            linkcolor=darkblue,urlcolor=darkblue,
            anchorcolor=darkblue,citecolor=darkblue}

\usepackage{amssymb}

\usepackage{graphicx,psfrag}
\parindent=0pt
\def\boxit{$\sqcap\kern-8pt\sqcup$}

\newcommand{\R}{\mathbb{R}}
\newcommand{\p}{\mathbb{P}}

\theoremstyle{plain}
\newtheorem{theorem}{Theorem}[section]

\newtheorem{proposition}[theorem]{Proposition}
\newtheorem{corollary}[theorem]{Corollary}

\newtheorem{remark}[theorem]{Remark}
\newtheorem{question}[theorem]{Question}

\theoremstyle{definition}

\newtheorem{conjecture}[theorem]{Conjecture}

\newtheorem{problem*}{Problem}

\usepackage{todonotes}


\title{Roudneff's Conjecture in Dimension $4$}

\dedicatory{}

\author[R. Hern\'andez-Ortiz]{Rangel Hern\'andez-Ortiz}
\address{Universitat Rovira i Virgili, Departament d'Enginyeria Inform\`{a}tica i Matem\`{a}tiques\\ Av. Pa\"{i}sos Catalans 26, 43007 Tarragona, Spain.}
\email{rangel.hernandez@urv.cat}

\author[K. Knauer]{Kolja Knauer}
\address{Departament de Matem\`atiques i Inform\`atica, Universitat de Barcelona, Spain \newline LIS, Aix-Marseille Universit\'e, CNRS, and Universit\'e de Toulon, Marseille, France}
\email{kolja.knauer@ub.edu}

\author[L. P. Montejano]{Luis Pedro Montejano}
\address{Serra H\'unter Fellow, Universitat Rovira i Virgili, Departament d'Enginyeria Inform\`{a}tica i Matem\`{a}tiques\\ Av. Pa\"{i}sos Catalans 26, 43007 Tarragona, Spain.}
\email{luispedro.montejano@urv.cat}

\author[M. Scheucher]{Manfred Scheucher}
\address{Institut f\"ur Mathematik, 
Technische Universit\"at Berlin, Germany}
\email{lastname@math.tu-berlin.de}

\keywords{Roudneff's conjecture, oriented matroid, arrangement of hyperplanes}
\subjclass[2010]{52C40, 05C35, 05Dxx, 68Rxx}
\date{\today}


\begin{document}

\maketitle

\begin{abstract}
J.-P. Roudneff conjectured in 1991 that every arrangement of $n \ge 2d+1\ge 5$ pseudohyperplanes in the real projective space $\mathbb{P}^d$ has at most $\sum_{i=0}^{d-2} \binom{n-1}{i}$ \emph{complete cells} (i.e., cells bounded by each hyperplane). The conjecture is true for $d=2,3$ and for arrangements arising from Lawrence oriented matroids.  The main result of this manuscript is to show the validity of Roudneff's conjecture for $d=4$. Moreover, based on computational data we conjecture that the maximum number of complete cells is only obtained by cyclic arrangements.
\end{abstract}

\section{Introduction}

A {\em projective} arrangement of $n$
pseudohyperplanes $H(d,n)$ in the real projective space $\p^d$ is a finite collection of 
mildly deformed linear hyperplanes with several combinatorial properties, see Section~\ref{sec:TopRep} for the definition in terms of oriented matroids. In particular, no point belongs to every pseudohyperplane of $H(d,n)$.
Any
arrangement $H(d,n)$ decomposes  $\p^d$ into a
$d$-dimensional cell complex and any $d$-cell $c$ of
$H(d,n)$ has at most $n$ {\em facets} (that is, $(d-1)$-cells). We
say that a $d$-cell $c$ is a {\em complete cell} of $H(d,n)$ if $c$ has exactly $n$
facets, i.e., $c$ is  bounded by each pseudohyperplane of $H(d,n)$.

\medskip  

The \emph{cyclic polytope} of dimension $d$ with $n$ vertices,
discovered by Carath\'eodory \cite{Car}, is
the  convex hull in $\R^d$ of $n\ge d+1\ge3$ different points
$x(t_1),\dots ,x(t_n)$ on the moment curve $x: \R\to
\R^d, \ t \mapsto (t,t^2,\dots ,t^d)$.
Cyclic polytopes play an important role in combinatorial  convex geometry due to their
connection with certain extremal problems. 
See for example, the upper
bound theorem due to McMullen \cite{Mac}.  
\emph{Cyclic arrangements} are defined as the dual of the cyclic polytopes.
As for cyclic polytopes, cyclic arrangements also have extremal
properties, see Section~\ref{sec:TopRep} for the definition in terms of oriented matroids. For instance, Shannon \cite{Sh} introduced cyclic
arrangements as examples of projective
arrangements in dimension $d$ which minimize the number of cells with $(d+1)$ facets.

\medskip  

Denote by $C_d(n)$ the number of complete cells of
the cyclic arrangement of dimension $d$ with $n$ hyperplanes. Roudneff \cite{R91} proved that  $C_d(n)\ge \sum\limits_{i=0}^{d-2}
\binom{n-1}{i}$ holds for $d \ge 2$ and that this bound is tight for all
$n\ge 2d+1$. Moreover, he conjectured that in that case, cyclic arrangements
maximize the number of complete cells.

\begin{conjecture}[\!\!{\cite[Conjecture 2.2]{R91}}] \label{conj_Roudneff} 
Every arrangement of $n\ge 2d+1\ge 5$ pseudohyperplanes in~$\p^d$ has at most
$\sum\limits_{i=0}^{d-2}
\binom{n-1}{i}$ complete cells.
\end{conjecture}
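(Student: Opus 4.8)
The plan is to work entirely in the oriented matroid model of Section~\ref{sec:TopRep}, where an arrangement $H(d,n)$ is a simple oriented matroid $\mathcal{M}$ of rank $d+1$ on $n$ elements, its $d$-cells are the topes of $\mathcal{M}$, and a \emph{complete cell} is a tope $T$ with a facet on every element $e$, equivalently a tope adjacent, across each $e$, to the reorientation $-_eT$ obtained by reversing the sign on $e$. Writing $f(d,n)=\sum_{i=0}^{d-2}\binom{n-1}{i}$ for the conjectured bound, the identity $\binom{n-1}{i}=\binom{n-2}{i}+\binom{n-2}{i-1}$ yields the Pascal-type recursion $f(d,n)=f(d,n-1)+f(d-1,n-1)$. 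This strongly suggests a double induction on $(d,n)$ driven by the two elementary operations of oriented matroid theory: deletion $\mathcal{M}\setminus e$ (removing a pseudohyperplane; same dimension, $n-1$ elements) and contraction $\mathcal{M}/e$ (passing to the induced $(d-1)$-dimensional arrangement on $e\cong\p^{d-1}$, again on $n-1$ elements). The base cases are the dimensions $d\le 4$ already established ($d=2,3$ classically and $d=4$ by the main theorem of this manuscript) together with the threshold slices $n=2d+1$, which the deletion step cannot reach since it would produce $(d,2d)$ below the range $n\ge 2d+1$; these must be treated separately.

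For the inductive step, fix $e$ and delete it. Every complete cell $c$ of $H(d,n)$ has a facet on $e$, so under deletion $c$ merges with its neighbour across $e$ into a single cell $\overline{c}$; since $c$ was bounded by all $H_i$ with $i\ne e$, so is $\overline{c}$, and hence $\overline{c}$ is a complete cell of $H(d,n-1)$. This defines a map $\phi_e$ from the complete cells of $H(d,n)$ to those of $H(d,n-1)$, and because a single pseudohyperplane cuts the cell $\overline{c}$ into at most two pieces, each fibre of $\phi_e$ has size $1$ or $2$. Writing $N$ for the number of complete cells of $\mathcal{M}$, $g(\cdot,\cdot)$ for the maximum over all arrangements, and $D_e$ for the set of \emph{doubled} cells $\overline{c}$ whose two pieces are both complete in $H(d,n)$, a fibre count gives $N\le g(d,n-1)+|D_e|$. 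By the inductive hypothesis $g(d,n-1)\le f(d,n-1)$, so everything reduces to proving $|D_e|\le f(d-1,n-1)$, i.e.\ that the doubled cells are controlled by the complete cells of the contraction $\mathcal{M}/e$.

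The natural attempt is to send a doubled cell $\overline{c}$ to its trace $t=e\cap\overline{c}$, a $(d-1)$-cell of the arrangement $\mathcal{M}/e$ induced on $e$; this map is manifestly injective, so it would suffice that $t$ be a \emph{complete} cell of $\mathcal{M}/e$, after which $|D_e|\le g(d-1,n-1)\le f(d-1,n-1)$ closes the recursion. Here lies the main obstacle. The facets of $t$ are exactly the ridges $t\cap H_i$ of the tope $c$ lying on $e$, so $t$ is bounded by all $n-1$ traces $H_i\cap e$ if and only if, inside $c$, the facet on $e$ is adjacent to the facet on $H_i$ for \emph{every} $i$. A complete cell carries all $n$ facets by definition, but these need not be pairwise adjacent, so the trace can fail to be complete; this failure is precisely what obstructs a direct induction and is, I expect, why the conjecture remains open for $d\ge 5$. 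To overcome it I would not fix a single $e$ but sum $N\le g(d,n-1)+|D_e|$ over all $n$ choices of deleted element, converting the pointwise adjacency requirement into an averaged flag count: the quantity $\sum_e 2|D_e|$ equals the number of incident pairs $(c,e)$ for which the neighbour of the complete cell $c$ across $e$ is again complete, which can be re-expressed through the facet-adjacency structure of the complete topes.

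I expect this averaging and flag-counting step to be the genuine difficulty: one must show that, summed over all pseudohyperplanes, the adjacencies among the facets of the complete cells cannot be so sparse that many doubled cells escape detection by the contractions, and that the resulting global estimate is sharp against $\sum_e f(d-1,n-1)$. The tools I would bring to bear are Euler and Dehn--Sommerville-type relations for the $d$-dimensional cell complex of $H(d,n)$, combined with the inductive bounds on the $n$ contractions, so as to trade individual completeness of each trace for a counting identity over the whole arrangement. The low-dimensional cases succeed precisely because the facet-adjacency structure of a complete cell is rigid enough there (for $d\le 4$ the trace is forced to be complete, or nearly so) that the elementary deletion--contraction recursion closes on the nose; pushing this rigidity, or its averaged surrogate, into all dimensions is exactly the content that lies beyond the present result.
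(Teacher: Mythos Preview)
The statement you are addressing is Conjecture~\ref{conj_Roudneff}, which the paper explicitly leaves open for $d\ge 5$; there is no proof in the paper to compare against, only the verification of the special case $d=4$ (by exhaustive computer enumeration at $n=2d+1=9$, combined with Proposition~\ref{prop_Roudneff}).

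Your scheme has two structural gaps that make it a heuristic rather than a proof. First, by Proposition~\ref{prop_Roudneff} the case $n=2d+1$ already implies the conjecture for all larger $n$ in that dimension, so your induction on $n$ starting from the ``base case'' $n=2d+1$ contributes nothing: that base case \emph{is} the open problem. A deletion--contraction induction could only help if it let you descend in $d$ at $n=2d+1$, but you cannot run your recursion there because deletion lands at $(d,2d)$, outside the hypothesis range, exactly as you note. Second, you yourself concede that the crucial estimate $|D_e|\le f(d-1,n-1)$ fails, since the trace on $e$ of a doubled cell need not be complete in $\mathcal{M}/e$. The proposed remedy of averaging over $e$ and appealing to unspecified Euler or Dehn--Sommerville identities is a hope, not an argument; no inequality is actually derived, and there is no reason to expect the flag count to close against $\sum_e f(d-1,n-1)$. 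Finally, your assertion that for $d\le 4$ ``the trace is forced to be complete, or nearly so'', and that this is why those cases succeed, is not how the known proofs work: the case $d=3$ is settled by a separate combinatorial argument of Ram\'irez~Alfons\'in, and $d=4$ in this paper is pure enumeration, not a deletion--contraction recursion.
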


The conjecture is true for $d=2$ (that is, any arrangement of $n$ pseudolines in~$\p^2$ contains at most one complete cell),
Ram\'irez Alfons\'in \cite{R99} proved the case $d=3$, and in \cite{MR15} the authors proved it for arrangements corresponding to Lawrence
oriented matroids.

\smallskip  

In \cite{FR01}  the exact
number of complete cells of cyclic arrangements was calculated for any positive
integers $d$ and $n$ with  $n\ge d+1$, namely, 
$$C_d(n)=\binom{d}{n-d}+
\binom{d-1}{n-d-1}+\sum_{i=0}^{d-2} \binom{n-1}{i}.$$
Thus, in view of Roudneff's conjecture,  the following question was asked in \cite{MR15}.

\begin{question}\label{question_smaller_n} Is it true that every arrangement of $n\ge d+1\ge 3$ pseudohyperplanes in~$\p^{d}$ has at most $C_d(n)$ complete cells?
\end{question}

Notice that there is a unique arrangement of 3 (resp.\ 4) 
lines in~$\p^2$ with
$C_2(3)=4$ (resp.\ $C_2(4)=3$) complete cells. Since Conjecture
\ref{conj_Roudneff} is true for $d=2$ and $n\ge 5$,
Question~\ref{question_smaller_n} is answered affirmatively for $d=2$.

\smallskip  

As the main result of this paper, 
we give an affirmative answer to Question~\ref{question_smaller_n} for $d=4$
and therefore prove Roudneff's conjecture for dimension $4$, further supporting the general conjecture. In addition, with a few simple observations, we answer Question~\ref{question_smaller_n} for $d=3$ and further strengthen Roudneff's conjecture.

\section{Oriented matroids}

Let us give some basic notions and definitions in oriented matroid theory. We assume some knowledge and standard notation of the theory of oriented matroids, for further reference the reader can consult the textbook  \cite{BVSWZ99}. 
A \emph{signed set} or \emph{signed vector} $X$ on ground set $E$ is a set $\underline{X}\subseteq E$ together with a partition $(X^+, X^-)$ of $\underline{X}$ into two distinguished subsets: $X^+$, the set of \emph{positive elements} of $X$, and $X^-$, its set of
\emph{negative elements}. The set $\underline{X}=X^+\cup X^-$ is the \emph{support} of $X$. We denote by $-X$ the sign-vector such that $-X^+=X^-$ and $-X^-=X^+$.  
An oriented matroid $\mathcal{M}=(E,\mathcal{C})$ is a pair of a finite ground set $E$ and a collection of signed sets on $E$ called \emph{circuits}, satisfying the following axioms:
\begin{itemize}
\item $\emptyset\notin\mathcal{C}$,
\item if $X\in \mathcal{C}$ then $-X\in \mathcal{C}$,
\item if $X,Y\in \mathcal{C}$ and  $\underline{X}\subseteq \underline{Y}$ then $X=\pm Y$, 
\item if $X,Y\in \mathcal{C}$, $X\neq- Y$, and $e\in X^+\cap Y^-$, then there is $Z\in \mathcal{C}$, with $e\notin\underline{Z}$ and $Z^+\subseteq X^+\cap Y^+$ and $Z^-\subseteq X^-\cap Y^-$.
\end{itemize}

We say that $X\in \mathcal{C}$ is a \emph{positive circuit} if $X^-=\emptyset$. We call the set of all reorientations of $\mathcal{M}$  its \emph{reorientation class}. We say that $\mathcal{M}$ is \emph{acyclic} if it does not contain positive circuits (otherwise, $\mathcal{M}$ is called \emph{cyclic}). 
A {\em reorientation} of $\mathcal{M}$ on $R\subseteq E$ is performed by changing the signs of the elements in $R$ in all the circuits of~$\mathcal{M}$. It is easy to check that the new set of signed circuits is also the set of circuits of an oriented matroid, usually denoted by $\mathcal{M}_R$. A reorientation is {\em acyclic} if $\mathcal{M}_R$ is acyclic.
Recall that oriented matroid on $n$ elements is {\em uniform of rank $r$ } if the set of supports of its circuits consists of all $(r+1)$-element subsets  of $E$.  
Given a uniform oriented matroid $\mathcal{M}$ of rank $r$ on $n=|E|$ elements, we denote its dual by $\mathcal{M}^*$, which is another uniform oriented matroid of rank $n-r$ on $n$ elements.
\smallskip

A characterization of oriented matroids in terms of basis orientations (that we will not make explicit here) was given by Lawrence \cite{L82}.  Let $r\ge 1$ be an integer and $E=\{1,\ldots, n\}$ be a set. A mapping $\chi: E^r \rightarrow \{-1,0,1\}$ (where we will abbreviate it by $\{-,0,+\}$) is a basis orientation of an oriented matroid of rank $r$ on $E$ if and only if $\chi$ is a \emph{chirotope}, that is, a special alternating mapping not identically zero. It is known that $\chi: E^r \rightarrow \{-,+\}$ is a chirotope if and only if $\chi$ is a basis orientation of a rank $r$ uniform oriented matroid on~$E$. 
Moreover, if $\chi(B)=+$ for any ordered basis $B = (b_1,\ldots,b_r)$ of $\mathcal{M}$ with $b_1<\ldots<b_r$, then the uniform matroid $\mathcal{M}$ is known to be the \emph{alternating oriented matroid} of rank $r$ on $n$ elements. In that case, the signs of each  circuit alternate along the ordering of $E$.

\smallskip  

Given two sign-vectors $X,Y\in \{+,-,0\}^E$,  their \emph{separation} is the set $S(X,Y)=\{e\in E\mid X_e\cdot Y_e=-\}$, where $X_e$ and $Y_e$ are the sings of the element $e$ in  $\underline{X}$ and $\underline{Y}$, respectively. We denote by $X\perp Y$ and say that $X$ and $Y$ are \emph{orthogonal} if the sets $S(X,Y)$ and $S(X,-Y)$ are either both empty or both non-empty. 
Maximal covectors of an oriented matroid $\mathcal{M}$  are usually called \emph{topes}.
It is known that  a sign-vector  $T\in \{+,-\}^E$ is a tope of $\mathcal{M}$ if and only if   $T\perp X$ for all circuit  $X\in \mathcal{C}$ (see \cite[Section~1.2, page~14]{BVSWZ99}). 
 Moreover, $T$ is a tope of $\mathcal{M}$ if and only if  $S(T,X)$ and $S(T,-X)$ are both non-empty, for every circuit $X\in \mathcal{C}$.

\subsection{Topological Representation Theorem}\label{sec:TopRep}

The combinatorial properties of arrangements of pseudohyperplanes can be studied in the language of oriented matroids. The Folkman--Lawrence topological representation theorem~\cite{FL78} states that the  reorientation classes of oriented matroids on $n$ elements and rank $r$ (without loops or parallel elements) are in one-to-one correspondence with the classes of isomorphism of arrangements of $n$ pseudospheres in~$S^{r-1}$ (see  \cite[Theorem 1.4.1]{BVSWZ99}). There is a natural identification between pseudospheres and pseudohyperplanes as follows. Recall that $\p^{r-1}$ is the topological space obtained from $S^d$ by identifying all pairs of antipodal points. The double covering map $\pi: S^{r-1} \rightarrow \p^{r-1}$, given by $\pi(x)=\{x,-x\}$, gives an identification of centrally symmetric subsets of $S^{r-1}$ and general subsets of $\p^{r-1}$. This way centrally symmetric pseudospheres in $S^{r-1}$ correspond to pseudohyperplanes in $\p^{r-1}$. Hence, the topological representation theorem can also be stated in terms of pseudohyperplanes in $\p^{r-1}$, i.e., the reorientation classes of  oriented matroids on $n$ elements and rank $r$ (without loops or parallel elements) are in one-to-one correspondence with the classes of isomorphism of arrangements of $n$ pseudohyperplanes in~$\p^{r-1}$ (see \cite[Section 5, exercise 5.8]{BVSWZ99}). 

\smallskip  

An arrangement $H(d,n)$ is called {\em simple} if every intersection of $d$ pseudohyperplanes is a unique distinct point.  Simple arrangements correspond to uniform oriented matroids.  The $d$-cells of any arrangement $H(d,n)$ are usually called topes since they are in one-to-one correspondence with the topes of each of the  oriented matroids $\mathcal{M}$ of rank $r=d+1$ on $n$ elements of its corresponding reorientation class. It is known 
that a tope of $\mathcal{M}$ (i.e, a $d$-cell of its corresponding arrangement) corresponds to an acyclic reorientation of $\mathcal{M}$ having as interior elements precisely those pseudohyperplanes not bordering the tope. Moreover,  a tope $T$ of $\mathcal{M}$ is a complete cell if reorienting any single element of $T$, the resulting sign-vector is also a tope of $\mathcal{M}$. 
Cyclic arrangements of $n$ hyperplanes in~$\p^{d}$ are equivalent to
alternating oriented matroids of rank $r=d+1$ on $n$ elements, which hence have exactly $2C_{r-1}(n)$ complete cells. Summarizing,  Question~\ref{question_smaller_n} (and hence Roudneff's conjecture) can be stated in the following form:

\begin{quote}
 \emph{Every rank $r$ oriented matroid $\mathcal{M}$ on $n\geq r+1$ elements has at most $2C_{r-1}(n)$ complete cells.}
\end{quote}

We summarize for later usage:
Given a rank $r$ oriented matroid $\mathcal{M}=(E,\mathcal{C})$, the following three conditions hold.

\begin{enumerate}[(a)]
  \item
  A tope of $\mathcal{M}$ is a sign-vector  $T\in \{+,-\}^E$  such that  $T\perp X$  for all circuit $X\in \mathcal{C}$. 
  \label{condition:topes_def}
  
  \item 
  A tope $T$ of $\mathcal{M}$ is a complete cell if reorienting any single element of $T$, the resulting sign-vector is also a tope of $\mathcal{M}$. \label{condition:tope=complet_cell}
  
  \item
  If the corresponding arrangement of $n$ pseudohyperplanes in~$\p^{r-1}$ of $\mathcal{M}$ is simple, then $\mathcal{M}$ is uniform. \label{condition:simple=uniform}
\end{enumerate}

\section{Previous results}

We will use the following result due to Roudneff.

\begin{proposition}[\!\!\cite{R91}]\label{prop_Roudneff}
  To prove Conjecture~\ref{conj_Roudneff} for dimension $d$, it suffices to verify it for all simple arrangements of $n=2d+1$ pseudohyperplanes in  $\p^d$.
\end{proposition}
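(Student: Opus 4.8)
The plan is to reduce the conjecture, in a fixed dimension $d$, to simple arrangements with exactly $n=2d+1$ pseudohyperplanes, in two essentially independent steps organized around the elementary identity
\[
R(d,n)=R(d,n-1)+R(d-1,n-1),\qquad R(d,n):=\sum_{i=0}^{d-2}\binom{n-1}{i},
\]
which is immediate from Pascal's rule. Throughout I would pass freely between an arrangement $H(d,n)$ and a rank $r=d+1$ oriented matroid $\mathcal M$ on $n$ elements, using conditions (a)--(c) summarized above: a complete cell is a tope $T\in\{+,-\}^E$ all of whose single-element reorientations $T^{(e)}$ are again topes, and ``simple'' means ``uniform''. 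The first step is to reduce to simple arrangements: perturbing $H(d,n)$ into a simple arrangement $H'(d,n)$ on the same ground set — always possible — does not decrease the number of complete cells, because a fixed interior point of a $d$-cell lies, after a sufficiently small perturbation, strictly on the same side of every perturbed pseudohyperplane, so every tope of $\mathcal M$ remains a tope of $\mathcal M'$; hence if $T$ and all the $T^{(e)}$ are topes of $\mathcal M$, the same holds in $\mathcal M'$, and $T$ is again a complete cell. So it suffices to bound the number of complete cells over simple arrangements.

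The second step is a deletion--contraction inequality. Let $\mathcal M$ be uniform of rank $d+1$ on $n$ elements and fix an element $e$; write $\mathcal M\setminus e\cong H(d,n-1)$ for the deletion and $\mathcal M/e\cong H(d-1,n-1)$ for the arrangement induced on the pseudohyperplane $H_e$. I would prove
\[
\#\{\text{complete cells of }\mathcal M\}\ \le\ \#\{\text{complete cells of }\mathcal M\setminus e\}\ +\ \#\{\text{complete cells of }\mathcal M/e\}.
\]
A complete cell $c$ of $\mathcal M$ has a facet on $H_e$; deleting $H_e$ glues $c$ to its neighbour across that facet into a cell $\tilde c$ of $\mathcal M\setminus e$ still bounded by all of the remaining $n-1$ pseudohyperplanes, hence complete. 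The resulting map $c\mapsto\tilde c$ is at most $2$-to-$1$, since only the (at most two) cells into which $H_e$ cuts $\tilde c$ can be sent to $\tilde c$; and it is $2$-to-$1$ at $\tilde c$ exactly when $H_e$ cuts $\tilde c$ into \emph{two} complete cells of $\mathcal M$. A boundary-incidence argument should show that this last situation occurs precisely when the cutting facet $\tilde c\cap H_e$ is bounded, inside $H_e$, by all $n-1$ traces $H_i\cap H_e$, i.e.\ is a complete cell of $\mathcal M/e$. Since distinct such $\tilde c$ produce distinct cutting facets, summing fibre sizes over $\mathcal M\setminus e$ yields the inequality.

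The third step is an induction on $n$. The base case is $n=2d+1$, which by the first step reduces to simple arrangements — precisely the assertion to be verified — together with the trivial observation that an arrangement in $\p^1$ with at least three pseudohyperplanes has no complete cells, matching $R(1,m)=0$. For $n\ge 2d+2$, perturb to a simple arrangement and apply the second step with any $e$: since $n-1\ge 2d+1$ and $n-1\ge 2(d-1)+1$, the two summands are bounded by $R(d,n-1)$ (induction hypothesis) and $R(d-1,n-1)$ (the conjecture in dimension $d-1$, which for the dimensions at issue is already established), and their sum is $R(d,n)$. As the recursion never leaves the range of validity, the induction closes.

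The step I expect to require the most care is the $2$-to-$1$ analysis of the second step, specifically the equivalence between ``$H_e$ cuts $\tilde c$ into two complete cells of $\mathcal M$'' and ``the cutting facet is a complete cell of $\mathcal M/e$''. The subtlety is that complete cells are far from simplicial — already in $\p^2$ a complete cell is an $n$-gon, so most pairs of its facets are not incident — so this equivalence must be read off the boundary structure of the cell rather than from any simplex heuristic. By contrast, the reduction to simple arrangements is routine once one grants the standard fact that every pseudohyperplane arrangement refines to a simple one.
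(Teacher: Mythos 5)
The paper does not actually prove this proposition: it is quoted from Roudneff~\cite{R91}, and the reduction stated there is internal to a fixed dimension $d$ --- nothing about dimension $d-1$ is assumed. Most of your argument is sound. The perturbation step is exactly the fact the paper attributes to Roudneff's Proposition~2.3. Your deletion--contraction inequality is also correct, and the assertion you leave open (if $H_e$ cuts a cell $\tilde c$ of $\mathcal M\setminus e$ into two complete cells of $\mathcal M$, then the cutting facet is a complete cell of $\mathcal M/e$) is true and needs only one line: writing the two cells as topes $(T,+)$ and $(T,-)$ with common restriction $T$, completeness gives that $(T^{(f)},+)$ and $(T^{(f)},-)$ are topes for every $f\neq e$, and eliminating $e$ between such a pair (equivalently: a path inside the corresponding cell of $\mathcal M\setminus e$ must cross $H_e$) produces the covector $(T^{(f)},0)$; likewise $(T,0)$ is a covector, so $T$ is a tope of $\mathcal M/e$ all of whose single-element flips are topes, i.e.\ a complete cell of $\mathcal M/e$. (The converse, which you do not need, follows by composing $(T^{(f)},0)$ with $(T,\pm)$.) Distinctness of the cutting facets gives the claimed injectivity, and your Pascal identity is correct.

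The genuine gap is in the closing induction: to bound the contraction term you invoke Roudneff's conjecture in dimension $d-1$ for arrangements of $n-1\ge 2d+1$ pseudohyperplanes. What you prove is therefore the conditional statement ``if the conjecture holds in dimension $d-1$, then verifying simple arrangements with $n=2d+1$ suffices in dimension $d$'' --- equivalently, a reduction to verifying $n=2d'+1$ simple arrangements in every dimension $d'\le d$. That is enough for this paper's application (for $d=4$ the needed case $d-1=3$ is Ram\'irez Alfons\'in's theorem), but it is not the proposition as stated: Roudneff's reduction holds for each fixed $d$ with no hypothesis on lower dimensions, whereas your argument cannot deliver that (for instance, for $d=6$ it would require the open case $d=5$). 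To match the statement you would either have to bound, within dimension $d$ and unconditionally, the number of facets of $H_e$ shared by two complete cells by $\sum_{i=0}^{d-3}\binom{n-2}{i}$, or else weaken the proposition to the conditional form and observe that it still suffices for the main theorem.
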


From the proof of the above proposition, it can be seen that even for any arrangement $H$ with $n \le 2d$ pseudohyperplanes in  $\p^d$, we may also perturb each hyperplane of $H$ a bit in order to obtain  a simple arrangement $H'$ with at least the same number of complete cells as $H$ (see Proposition 2.3 of \cite{R91}). This shows that also for Question~\ref{question_smaller_n}, we can restrict ourselves to simple arrangements.

\begin{remark}\label{simple_arrang}
  To answer Question~\ref{question_smaller_n}  for dimension $d$ in the affirmative, it suffices to verify it for simple arrangements of  pseudohyperplanes in  $\p^d$.
\end{remark}

Thus, by condition~(\ref{condition:simple=uniform}) 
and by Remark~\ref{simple_arrang}, it is sufficient to prove Question~\ref{question_smaller_n} for uniform oriented matroids. The following observation will be useful in this work.

\begin{remark}\label{oneclass_r+2}
There is only one reorientation class of uniform rank $r$  oriented matroids on $n\le r+2$ elements.
\end{remark}
\begin{proof}
The number of reorientation classes of a uniform oriented matroid $\mathcal{M}$ of rank $r$ on $n$ elements is equal to the number of reorientation classes of its dual  $\mathcal{M}^*$.
Now, if  $\mathcal{M}$ has rank $r$  and $n\le r+2$ elements, then $\mathcal{M}^*$ has rank at most $2$. Hence, $\mathcal{M}^*$ and therefore $\mathcal{M}$ has only one reorientation class. 
\end{proof}

Thus, every acyclic uniform oriented matroid on at most $r+2$ elements is in the reorientation class of the alternating oriented matroid and hence, they all have the same number of complete cells.
As a consequence of Remarks \ref{simple_arrang} and \ref{oneclass_r+2}, we can answer affirmatively Question~\ref{question_smaller_n} for $n\le r+2$.  In particular, as for $r=4$ (dimension $d=3$) Conjecture~\ref{conj_Roudneff} is true for $n\ge 7$, we obtain the following.

\begin{corollary}
Every arrangement of $n\ge 4$ pseudohyperplanes in~$\p^{3}$ has at most $C_3(n)$ complete cells.
\end{corollary}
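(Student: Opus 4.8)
The plan is to derive the corollary by feeding the two ingredients already available for rank $r=4$ into the reductions set up above. First I would use Remark~\ref{simple_arrang} together with condition~(\ref{condition:simple=uniform}) to reduce the statement to \emph{uniform} rank-$4$ oriented matroids on $n$ elements; equivalently, it suffices to bound the number of complete cells of the (up to isomorphism) simple arrangements of $n$ pseudohyperplanes in $\p^3$. After this reduction I would split the argument according to the size of $n$, treating $4\le n\le 6$ by the small-case classification and $n\ge 7$ by the known case $d=3$ of Roudneff's conjecture.

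For $4\le n\le 6$ we are in the range $n\le r+2$ with $r=4$, so Remark~\ref{oneclass_r+2} says that there is a unique reorientation class of uniform rank-$4$ oriented matroids on $n$ elements, namely that of the alternating oriented matroid, which corresponds to the cyclic arrangement. Hence the only simple arrangement of $n$ pseudohyperplanes in $\p^3$ is the cyclic one, whose number of complete cells equals $C_3(n)$ by the formula of \cite{FR01}; so the claimed bound holds, in fact with equality. For $n\ge 7$ I would invoke Ram\'irez Alfons\'in's theorem \cite{R99}, i.e.\ Conjecture~\ref{conj_Roudneff} in dimension $3$, which gives at most $\sum_{i=0}^{1}\binom{n-1}{i}=n$ complete cells; it then remains to observe that this equals $C_3(n)$ in this range, because $n-3\ge 4>3$ and $n-4\ge 3>2$ force the two extra terms $\binom{3}{n-3}$ and $\binom{2}{n-4}$ in the \cite{FR01} formula to vanish, leaving $C_3(n)=\sum_{i=0}^{1}\binom{n-1}{i}=n$. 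Combining the two cases yields the corollary.

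I do not expect a genuine obstacle here: the substantive work is already contained in the cited results, and what is left is the elementary arithmetic identification of the bound $C_3(n)$ with $n$ for $n\ge 7$ and with the single-reorientation-class value for $4\le n\le 6$. If one wanted the range $4\le n\le 6$ to be fully self-contained, one could instead evaluate $C_3(4)=8$, $C_3(5)=10$, $C_3(6)=8$ directly from the \cite{FR01} formula and match these against the complete-cell counts of the alternating oriented matroid of rank~$4$ given by the topological representation (recalling that a cyclic arrangement in $\p^3$ has exactly $C_3(n)$ complete cells).
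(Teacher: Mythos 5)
Your proposal is correct and follows essentially the same route as the paper: reduce to simple (hence uniform) arrangements, handle $4\le n\le 6$ via the unique reorientation class of uniform rank-$4$ oriented matroids on $n\le r+2$ elements (the alternating/cyclic one), and handle $n\ge 7$ via Ram\'irez Alfons\'in's proof of the $d=3$ case, noting that the conjectured bound coincides with $C_3(n)=n$ there. Your explicit arithmetic check that the extra binomial terms vanish for $n\ge 7$ (and the values $C_3(4)=8$, $C_3(5)=10$, $C_3(6)=8$) just makes explicit what the paper leaves implicit.
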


\section{Main result}

Given a uniform rank $r$ oriented matroid $\mathcal{M}=(E,\mathcal{C})$ on $n=|E|$ elements, we explain the procedure to obtain the set of all complete cells of its corresponding arrangement of $n$ pseudohyperplanes in  $\p^d$ via the signed bases of $\mathcal{M}$. We start with the signature of all the bases of $\mathcal{M}$ and then, we obtain all its signed circuits. After that, we get the set of topes of $\mathcal{M}$ and finally, we obtain  the set of all complete cells of $\mathcal{M}$ as follows:

\medskip  

\emph{Bases $\rightarrow$ Circuits:}
From the chirotope, we may obtain  that 
$$\chi(B)=-X_{b_i}\cdot X_{b_{i+1}}\cdot \chi(B'),$$ 
where $\underline{X}=\{b_1,...,b_{r+1}\}$ is the support of an ordered circuit of $\mathcal{M}$ and $B=\underline{X} - {b_i}$ and $B'=\underline{X} - {b_{i+1}}$ are two bases of $\mathcal{M}$ (see \cite[Section 3.5]{BVSWZ99}). Hence, given $\chi(B)$, for any basis $B$ of $\mathcal{M}$,  we obtain the signed circuit $X$ and since $\mathcal{M}$ is uniform, we can proceed to obtain all the signed circuits of   $\mathcal{M}$.

\medskip 

\emph{Circuits $\rightarrow$ Topes:}
For any sign-vector $T\in \{+,-\}^n$,  we verify condition~(\ref{condition:topes_def}) to confirm that $T$ is a tope of $\mathcal{M}$, i.e.,  we check for all circuit $X\in \mathcal{C}$ of $\mathcal{M}$
if $S(T,X)$ and $S(T,-X)$ are both non-empty (see \cite[Section~1.2, page~14]{BVSWZ99}).

\medskip 

\emph{Topes $\rightarrow$ Complete cells:}
For any tope $T$,  we verify condition~(\ref{condition:tope=complet_cell})  to  confirm that $T$ is a complete cell of $\mathcal{M}$. That is, we reorient any single element of $T$, check if the resulting sign-vector is a tope of $\mathcal{M}$ and verify this for each of the $n$ entries of $T$.

\medskip 

Finschi and Fukuda \cite{FinschiFukuda2002,Finschi2001}
enumerated the signed bases of all the reorientation classes of uniform rank~$5$ oriented matroids on~$8$ and $9$~elements.
While the data for $8$ elements is available on the website~\cite{FinschiDBOM}, 
the data for $9$ elements and also their source code for the enumeration is available upon request from Lukas Finschi.
We follow the procedure explained above with a computer program (available at \cite{supplemental_data}) which gives us the number of complete cells of each acyclic reorientation class.
After about 26 CPU days of computing time 
(i.e.,\ few days with parallelization), 
we obtain the following.

\begin{theorem}\label{r5n8}
Each of the $135$ 
reorientation classes of uniform rank $5$ oriented matroids on $8$ elements has at most $2C_4(8)$ complete cells. Moreover, the class 
of the alternating oriented matroid is the only one with exactly $2C_4(8)$ complete cells.
\end{theorem}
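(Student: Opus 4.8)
The statement concerns only finitely many objects, so the plan is to prove it by an exhaustive computer search realizing the pipeline \emph{Bases $\to$ Circuits $\to$ Topes $\to$ Complete cells} described above. We first record that the number of complete cells is a reorientation invariant: reorienting a single element commutes with reorienting on an arbitrary subset $R\subseteq E$, and both the tope condition~(\ref{condition:topes_def}) and the completeness condition~(\ref{condition:tope=complet_cell}) are preserved under reorientation. Hence it suffices to run the computation once for each of the $135$ reorientation classes.

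The input will be the list of all $135$ chirotopes representing the uniform rank~$5$ oriented matroids on $E=\{1,\dots,8\}$, as enumerated by Finschi and Fukuda \cite{FinschiFukuda2002,Finschi2001,FinschiDBOM}. For a fixed chirotope $\chi$ we proceed as follows. First, we reconstruct the set $\mathcal{C}$ of signed circuits: by uniformity every $6$-element subset of $E$ is the support of a circuit, and its signs are recovered from $\chi$ through the relation $\chi(B)=-X_{b_i}\cdot X_{b_{i+1}}\cdot\chi(B')$, where $B=\underline{X}\setminus\{b_i\}$ and $B'=\underline{X}\setminus\{b_{i+1}\}$ for consecutive elements $b_i,b_{i+1}$ of the ordered support. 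Second, we determine the set of topes by testing, for each of the $2^{8}=256$ sign-vectors $T\in\{+,-\}^{E}$, condition~(\ref{condition:topes_def}), i.e.\ that $S(T,X)$ and $S(T,-X)$ are both non-empty for every $X\in\mathcal{C}$. Third, we extract the complete cells as the topes $T$ all of whose $8$ single-element reorientations are again topes. Counting them and comparing with $2C_4(8)=62$ (the value given by the closed formula of \cite{FR01}) yields the inequality, and recording the unique class that attains $62$ gives the ``moreover'' part.

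Each class requires processing at most $256$ sign-vectors against $\binom{8}{6}=28$ circuits, so the computation is small and is readily parallelized; the reported figure of about $26$ CPU days is consumed almost entirely by these orthogonality tests summed over all $135$ classes. The source code and the resulting per-class counts are provided in the supplemental material \cite{supplemental_data}.

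The genuine difficulty here is correctness rather than feasibility. The argument relies on the completeness and non-redundancy of the Finschi--Fukuda list of $135$ classes and on a correct implementation of the chirotope-to-circuit and orthogonality routines. We would guard against errors with independent sanity checks: verifying that the alternating chirotope produces exactly $2C_4(8)=62$ complete cells; checking that every class has the reorientation-invariant tope count $2\sum_{i=0}^{4}\binom{7}{i}=198$ (the $f$-vector of a uniform oriented matroid depending only on its rank and number of elements); and, ideally, re-deriving the entire vector of $135$ counts with a second, independently written implementation of the pipeline.
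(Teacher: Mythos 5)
Your proposal is correct and follows essentially the same route as the paper: the authors likewise run the \emph{Bases $\to$ Circuits $\to$ Topes $\to$ Complete cells} pipeline over the Finschi--Fukuda catalogue of the $135$ reorientation classes and compare the resulting counts with $2C_4(8)=62$, relying on the completeness of that enumeration and on the supplemental code \cite{supplemental_data}. (Only a minor factual point: the quoted $\approx 26$ CPU days is dominated by the $9$-element computation of Theorem~\ref{r5n9} with its $9276595$ classes, not by the $135$ classes on $8$ elements.)
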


\begin{theorem}\label{r5n9}
Each of the $9276595$  reorientation classes of uniform rank $5$ oriented matroids on $9$ elements has at most $2C_4(9)$ complete cells. Moreover, the class 
of the alternating oriented matroid is the only one with exactly $2C_4(9)$ complete cells.
\end{theorem}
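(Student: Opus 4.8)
The statement will be established by an exhaustive computer search realizing the pipeline \emph{Bases $\rightarrow$ Circuits $\rightarrow$ Topes $\rightarrow$ Complete cells} described above, run on a single representative of each reorientation class. One representative per class suffices because the number of complete cells is a reorientation invariant: reorienting on $R\subseteq E$ sends a complete cell $T$ of $\mathcal{M}$ to $T_R$, which is a tope of $\mathcal{M}_R$ all of whose single-coordinate flips $(T_R)^{(e)}=(T^{(e)})_R$ are again topes of $\mathcal{M}_R$, hence a complete cell of $\mathcal{M}_R$. The first step is to obtain from the enumeration of Finschi and Fukuda~\cite{FinschiFukuda2002,Finschi2001} the chirotopes $\chi\colon E^5\to\{-,+\}$ of all $9276595$ reorientation classes of uniform rank $5$ oriented matroids on $E=\{1,\dots,9\}$.

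For each such $\chi$ I would then: reconstruct the $\binom{9}{6}=84$ signed circuits from the relation $\chi(B)=-X_{b_i}\cdot X_{b_{i+1}}\cdot\chi(B')$; run over the $2^9=512$ sign vectors $T\in\{+,-\}^9$, keeping those satisfying condition~(\ref{condition:topes_def}), that is, $S(T,X)\neq\emptyset$ and $S(T,-X)\neq\emptyset$ for every circuit $X$, to obtain the topes; and for each tope test condition~(\ref{condition:tope=complet_cell}) by flipping each of its $9$ coordinates in turn and checking that the result is still a tope. The number of topes passing the last test is the number of complete cells of the class. Finally I would take the maximum of these $9276595$ counts, verify that it equals $2C_4(9)$, that it is attained, and that the only class attaining it is that of the alternating oriented matroid of rank $5$ on $9$ elements --- recognizing the latter either intrinsically (some reorientation of the representative satisfies $\chi(b_1,\dots,b_5)=+$ whenever $b_1<\dots<b_5$) or by matching its complete-cell count with that of the cyclic arrangement, which is $2C_4(9)$ by \cite{FR01}. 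Since Proposition~\ref{prop_Roudneff} reduces Roudneff's conjecture for $d=4$ to simple arrangements of $n=2d+1=9$ pseudohyperplanes, which by condition~(\ref{condition:simple=uniform}) are precisely the uniform rank $5$ oriented matroids on $9$ elements, this search also proves the conjecture in dimension $4$.

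The main obstacle is not mathematical but computational: iterating the pipeline over more than nine million classes is expensive --- on the order of $26$ CPU days, feasible only by parallelizing over the classes --- and, more importantly, the implementation has to be demonstrably correct, since the whole theorem rests on it. I would guard against errors by independent cross-checks: rerunning the identical code on the $135$ classes for $8$ elements and reproducing Theorem~\ref{r5n8}, including the value $2C_4(8)$ and its unique maximizer; re-deriving by hand the complete-cell number of the single reorientation class that exists for $n\le r+2=7$ by Remark~\ref{oneclass_r+2}, and of small structured families such as the Lawrence oriented matroids treated in~\cite{MR15}; verifying that the circuits and topes emitted by the code obey the oriented-matroid axioms and the orthogonality characterization of topes; and checking the exact cyclic counts against the formula of~\cite{FR01}. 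The source code and the full list of per-class complete-cell counts are made available at~\cite{supplemental_data}, so that the entire computation can be reproduced and audited.
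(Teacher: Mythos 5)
Your proposal is correct and follows essentially the same route as the paper: the \emph{Bases $\rightarrow$ Circuits $\rightarrow$ Topes $\rightarrow$ Complete cells} pipeline applied by computer to one representative of each of the $9276595$ Finschi--Fukuda reorientation classes, with the maximum count compared to $2C_4(9)$ and uniqueness checked for the alternating class. Your explicit remark that the complete-cell count is a reorientation invariant (so a single representative per class suffices) and your proposed cross-checks are sensible additions, but the underlying argument is the same exhaustive verification the paper carries out.
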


We can now prove our main result:

\begin{theorem}
Every arrangement of $n\ge  5$ pseudohyperplanes in~$\p^4$ has at most
$C_4(n)$ complete cells.
\end{theorem}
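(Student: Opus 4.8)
\emph{Strategy.} The plan is to combine the results established above. By Remark~\ref{simple_arrang} and condition~(\ref{condition:simple=uniform}), together with the topological representation theorem, every arrangement of $n$ pseudohyperplanes in $\p^4$ has at most as many complete cells as some simple arrangement of $n$ pseudohyperplanes, and simple arrangements of $n$ pseudohyperplanes in $\p^4$ correspond to uniform rank~$5$ oriented matroids on $n$ elements. Hence it suffices to show that every uniform rank~$5$ oriented matroid $\mathcal{M}$ on $n\ge 5$ elements has at most $2C_4(n)$ complete cells (the factor $2$ being the usual one from the double cover $\pi\colon S^4\to\p^4$, under which $2C_4(n)$ complete topes of $\mathcal{M}$ correspond to $C_4(n)$ complete cells of the arrangement in $\p^4$). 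I would split this into the ranges $5\le n\le 7$, $n\in\{8,9\}$, and $n\ge 10$.

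\emph{The finite ranges.} For $5\le n\le 7$, we have $n\le r+2$ with $r=5$, so by Remark~\ref{oneclass_r+2} there is a single reorientation class of uniform rank~$5$ oriented matroids on $n$ elements, that of the alternating oriented matroid, which by the definition of $C_4$ has exactly $2C_4(n)$ complete cells; the bound holds with equality. For $n=8$ and $n=9$ the bound $2C_4(n)$ is precisely the content of Theorems~\ref{r5n8} and~\ref{r5n9}, the computer-assisted enumerations of all reorientation classes of uniform rank~$5$ oriented matroids on $8$, respectively $9$, elements.

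\emph{The range $n\ge 10$.} For these I would invoke Roudneff's reduction, Proposition~\ref{prop_Roudneff}: to prove Conjecture~\ref{conj_Roudneff} in dimension $d=4$ --- i.e.\ the bound $\sum_{i=0}^{2}\binom{n-1}{i}$ for all $n\ge 2d+1=9$ --- it suffices to verify it for simple arrangements of exactly $n=9$ pseudohyperplanes in $\p^4$, which by condition~(\ref{condition:simple=uniform}) are the uniform rank~$5$ oriented matroids on $9$ elements. Theorem~\ref{r5n9} supplies exactly this verification, once one records that $\binom{4}{5}=\binom{3}{4}=0$ and hence $C_4(9)=\sum_{i=0}^{2}\binom{8}{i}$. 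So Conjecture~\ref{conj_Roudneff} holds in dimension $4$, giving every arrangement of $n\ge 9$ pseudohyperplanes in $\p^4$ at most $\sum_{i=0}^{2}\binom{n-1}{i}$ complete cells. It then remains only to match the two bounds: by the formula of \cite{FR01}, $C_4(n)=\binom{4}{n-4}+\binom{3}{n-5}+\sum_{i=0}^{2}\binom{n-1}{i}$, and the first two terms vanish precisely for $n\ge 9$, so $C_4(n)=\sum_{i=0}^{2}\binom{n-1}{i}$ in that range, while the bound $C_4(n)$ was obtained directly for $5\le n\le 8$; the ranges $5\le n\le 8$ and $n\ge 9$ together cover all $n\ge 5$, which completes the proof.

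\emph{The main obstacle.} The only genuinely substantive input is the finite enumeration behind Theorems~\ref{r5n8} and~\ref{r5n9}; everything else is bookkeeping. I would emphasise that $d=4$ is just small enough to force a separate treatment of $n=8$: since $8<2d+1$ it lies outside Conjecture~\ref{conj_Roudneff}, and indeed $C_4(8)=31>29=\sum_{i=0}^{2}\binom{7}{i}$, so the (weaker) bound $C_4(8)$ cannot be derived from Proposition~\ref{prop_Roudneff} and must be checked directly --- which is why two computational theorems are required, not just the one for $n=2d+1=9$. As an aside, one can also reduce from $n$ to $n-1$ by hand: deleting a pseudohyperplane $H_e$ and noting that the $e$-adjacent pairs of complete cells of $\mathcal{M}$ inject into the complete cells of the contraction $\mathcal{M}/e$ --- a rank~$4$ oriented matroid, to which the dimension~$3$ bound established above applies --- yields that the number of complete cells of $\mathcal{M}$ is at most that of $\mathcal{M}\setminus e$ plus that of $\mathcal{M}/e$; but for $d=4$ this estimate is too wasteful when $n\le 9$, so it does not replace the computations, and the clean route for the high range is via Proposition~\ref{prop_Roudneff}.
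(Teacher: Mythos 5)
Your proof is correct and follows essentially the same route as the paper: reduce to simple arrangements (uniform rank-$5$ oriented matroids), handle $n\le 7$ via the unique reorientation class, $n=8$ and $n=9$ via the computational Theorems~\ref{r5n8} and~\ref{r5n9}, and $n\ge 10$ via Roudneff's reduction (Proposition~\ref{prop_Roudneff}). Your extra bookkeeping (noting $C_4(n)=\sum_{i=0}^{2}\binom{n-1}{i}$ for $n\ge 9$ and why $n=8$ needs its own check) only makes explicit what the paper leaves implicit.
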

\begin{proof}
By Proposition~\ref{simple_arrang}, it is sufficient to prove the theorem for simple arrangements, that is, for uniform oriented matroids (see condition~(\ref{condition:simple=uniform})). Thus, by Remark~\ref{oneclass_r+2} and Theorem~\ref{r5n8}, the result holds for $n=5,6,7$ and~$8$. Finally, by Proposition~\ref{prop_Roudneff}  it suffices to verify it for $n=9$. Therefore,  the result holds by Theorem~\ref{r5n9}.
\end{proof}

Finally, we have used our computer program to verify that the cyclic arrangement is the unique example which maximizes the number of complete cells for $d=2$ and $n\le 10$, for $d=3$ and $n\le7$, and for $d=4$ and $n\le 9$.
Based on our computational evidence, we conclude this article 
with the following strengthening of Roudneff's conjecture and Question~\ref{question_smaller_n}: 
\begin{conjecture}
\label{conj_strengthening}
Every arrangement of $n\ge d+1\ge 3$ pseudohyperplanes in~$\p^d$ has at most
$C_d(n)$ complete cells.
Moreover, 
among all arrangements of $n$ pseudohyperplanes in~$\p^{d}$ the cyclic arrangement is (up to isomorphism) the only one with $C_d(n)$ complete cells.
\end{conjecture}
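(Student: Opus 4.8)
The plan is to prove the general statement by induction on the rank, reducing everything to a sweeping (deletion--contraction) argument on uniform oriented matroids and matching the recursion satisfied by the cyclic numbers. By Remark~\ref{simple_arrang} together with condition~(\ref{condition:simple=uniform}), it suffices to treat simple arrangements, i.e.\ uniform oriented matroids, so throughout I work with a uniform oriented matroid $\mathcal{M}$ of rank $r=d+1$ on $n$ elements and write $f(\mathcal{M})$ for its number of complete cells; by conditions~(\ref{condition:topes_def}) and~(\ref{condition:tope=complet_cell}) these are the topes $T\in\{+,-\}^E$ all of whose single-element reorientations $T^e$ are again topes. The target is $f(\mathcal{M})\le 2\,C_{r-1}(n)$ with equality only for the alternating oriented matroid. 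The smallest cases $n\le r+2$ are already settled by Remark~\ref{oneclass_r+2}, since there is a unique reorientation class and it is the alternating one. For $n\ge 2d+1$ the bound $C_d(n)$ collapses to Roudneff's bound $R(d,n):=\sum_{i=0}^{d-2}\binom{n-1}{i}$ (both correction terms $\binom{d}{n-d}$ and $\binom{d-1}{n-d-1}$ vanish), and Proposition~\ref{prop_Roudneff} reduces the upper bound to the single critical value $n=2d+1$. The intermediate range $r+2<n\le 2d$, where $C_d(n)$ genuinely exceeds $R(d,n)$, has no clean reduction and would have to be absorbed into the incremental count below run against the exact value $C_d(n)$.

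The core of the plan is an incremental count. Fix an element $e$ and compare $\mathcal{M}$ with its deletion $\mathcal{M}\setminus e$ (rank $r$ on $n-1$ elements) and its contraction $\mathcal{M}/e$ (rank $r-1$ on $n-1$ elements, geometrically the arrangement induced on the hyperplane $H_e$). Every complete cell $c$ of $\mathcal{M}$ is bounded by $H_e$, so its facet on $H_e$ is a $(d-1)$-cell bounded by the traces of all remaining hyperplanes, i.e.\ a complete cell of $\mathcal{M}/e$; conversely, inserting $H_e$ into $\mathcal{M}\setminus e$ can create new complete cells only along $H_e$, and these are governed by the complete cells of $\mathcal{M}/e$. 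The aim is to package this into a recursive inequality of the shape $f(\mathcal{M})\le f(\mathcal{M}\setminus e)+f(\mathcal{M}/e)$ for a suitably chosen $e$, and then match it against the Pascal-type identity $R(d,n)=R(d,n-1)+R(d-1,n-1)$, which follows directly from $\binom{n-1}{i}=\binom{n-2}{i}+\binom{n-2}{i-1}$. Induction on $r$ and $n$, with base cases supplied by Remark~\ref{oneclass_r+2} and, in rank $5$, by Theorems~\ref{r5n8} and~\ref{r5n9}, then yields $f(\mathcal{M})\le 2\,C_{r-1}(n)$.

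For the uniqueness part I would track the equality case through the same recursion. Equality in the incremental inequality forces, for every choice of $e$, that both $\mathcal{M}\setminus e$ and $\mathcal{M}/e$ are themselves extremal and that no complete cell is lost in passing to either minor. A rigidity argument should then show that an oriented matroid all of whose single-element deletions and contractions are alternating is itself alternating, which pins down the cyclic arrangement as the unique maximizer.

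The hard part will be establishing the incremental inequality with the correct constant in arbitrary rank, and this is precisely what keeps the conjecture open in general. The naive geometric bound allows each complete cell of $\mathcal{M}/e$ to be the $H_e$-facet of \emph{two} complete cells of $\mathcal{M}$, giving only $f(\mathcal{M})\le f(\mathcal{M}\setminus e)+2\,f(\mathcal{M}/e)$, which overshoots $R(d,n)$; moreover, inserting $H_e$ may also destroy complete cells of $\mathcal{M}\setminus e$ that are not tracked by $\mathcal{M}/e$, so ruling out a net surplus requires genuinely sharp control of the merging and splitting of cells along $H_e$. Finding a deletion--contraction identity that holds for \emph{every} uniform oriented matroid—rather than only for Lawrence matroids or in low rank—and that is simultaneously tight enough to reproduce the exact constant $C_d(n)$ together with its uniqueness statement is the central obstacle; it is exactly because no such argument is currently available that the present paper establishes the rank-$5$ case by exhaustive enumeration (Theorems~\ref{r5n8} and~\ref{r5n9}) instead of by this inductive scheme.
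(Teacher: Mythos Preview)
The statement you are addressing is a \emph{conjecture}, not a theorem: the paper does not prove it. It is explicitly posed as an open strengthening of Roudneff's conjecture, supported only by the computational evidence of Theorems~\ref{r5n8} and~\ref{r5n9} and the small cases listed just before it. There is therefore no ``paper's own proof'' to compare against.

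Your proposal is not a proof either, and you say so yourself. The entire argument hinges on an inequality of the form $f(\mathcal{M})\le f(\mathcal{M}\setminus e)+f(\mathcal{M}/e)$ for some well-chosen element $e$, and you correctly observe that the naive geometric count only yields $f(\mathcal{M})\le f(\mathcal{M}\setminus e)+2\,f(\mathcal{M}/e)$, which is too weak to match the Pascal recursion for $R(d,n)$, let alone the exact value $C_d(n)$ in the intermediate range $r+2<n\le 2d$. You also note that inserting $H_e$ can destroy complete cells of $\mathcal{M}\setminus e$ in a way not tracked by $\mathcal{M}/e$, so even the term $f(\mathcal{M}\setminus e)$ is not obviously an upper bound for the cells that survive. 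Without a mechanism to close either of these gaps---and none is offered---the induction never gets off the ground beyond the base cases already covered by Remark~\ref{oneclass_r+2} and the computer checks. The uniqueness clause inherits the same defect: the rigidity step (``all minors alternating implies alternating'') is plausible but unproved here, and in any case it is only reached through the equality case of an inequality you have not established. In short, what you have written is an honest outline of why the conjecture is hard, not a proof of it; this is consistent with the paper's own decision to leave it open.
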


Last but not least, as the proof of Proposition~\ref{prop_Roudneff},
it suffices to verify Conjecture~\ref{conj_strengthening} for simple arrangements of pseudohyperplanes in~$\p^d$.
However, we do not know whether the setting can also be restricted to $ n \le 2d+1$ without loss of generality.

\subsection*{Acknowledgements.}
L.\ P.\ M.\  was supported by SGR Grant 2021 00115.

M.\ S.\ was supported by DFG Grant SCHE~2214/1-1.

K.\ K.\ was supported by the Spanish \emph{Ministerio de Econom\'ia,
Industria y Competitividad} through grants RYC-2017-22701 and ALCOIN: PID2019-104844GB-I00.

\end{document}